\let\@@pmod\pmod
\DeclareRobustCommand{\pmod}{\@ifstar\@pmods\@@pmod}
\def\@pmods#1{\mkern4mu({\operator@font mod}\mkern 6mu#1)}
\newcommand{\Z}{\mathbb{Z}}
\newtheorem{theorem}{Theorem}
\newtheorem{proposition}[theorem]{Proposition}
\newtheorem{lemma}[theorem]{Lemma}
\begin{document}
\title{A variant of the Euclid--Mullin sequence containing every prime}
\author[A. R. Booker]{Andrew R.~Booker}
\thanks{The author was partially supported by EPSRC Grant {\tt EP/K034383/1}.}
\address{
Howard House\\
University of Bristol\\
Queens Ave\\
Bristol\\
BS8 1SN\\
United Kingdom
}
\email{\tt andrew.booker@bristol.ac.uk}
\begin{abstract}
We consider a generalization of Euclid's proof of the infinitude of primes
and show that it leads to variants of the Euclid--Mullin sequence that
provably contain every prime number.
\end{abstract}
\maketitle
\section{Introduction}
Given a finite set $\{p_1,\ldots,p_k\}$ of prime numbers, let $p_{k+1}$ be
a prime factor of $1+p_1\cdots p_k$. Then, as shown by Euclid, $p_{k+1}$
is necessarily distinct from $p_1,\ldots,p_k$. Iterating this procedure,
we thus obtain an infinite sequence of distinct primes. For instance,
beginning with $k=0$ (with the convention that the empty product is
$1$) and choosing $p_{k+1}$ as small as possible at each step, one
obtains the \emph{Euclid--Mullin sequence} (\texttt{A000945} in the
OEIS \cite{a000945}). More generally, following Clark \cite{clark},
we call any sequence resulting from this construction a
\emph{Euclid sequence with seed $\{p_1,\ldots,p_k\}$}.

One of the central questions in this area was posed by Mullin
\cite{mullin} in 1963: Does the Euclid--Mullin sequence contain every
prime number? Despite a compelling heuristic argument of Shanks \cite{shanks}
that the answer is yes, even the broader question of whether there
is \emph{any} Euclid sequence containing every prime number remains
open. (On the other hand, there are Euclid sequences that
provably do not contain every prime. For instance, starting from $k=0$
and choosing $p_{k+1}$ as large as possible at each step, one obtains the
\emph{second Euclid--Mullin sequence}, which is known to omit infinitely
many primes \cite{booker,pt}.)
In \cite{bi} it was shown that, for any given seed
$\{p_1,\ldots,p_k\}$, the possible Euclid sequences have a natural
directed graph structure. Although one can prove many interesting properties
of the family of graphs obtained by varying the seed, proving much
about any particular graph remains an elusive goal.

In this note, following a suggestion of Trevor Wooley, we consider a
generalization of Euclid's construction, in the hope that it will be more
amenable to proof. Precisely, if $\{p_1,\ldots,p_k\}$ is a set of prime
numbers, then for any $I\subseteq\{1,\ldots,k\}$, the number $N_I=\prod_{i\in
I}p_i+\prod_{i\in\{1,\ldots,k\}\setminus I}p_i$ is coprime to $p_1\cdots
p_k$ and has at least one prime factor. Iteratively choosing a set $I$ and
a prime $p_{k+1}\mid N_I$, we obtain an infinite sequence $p_1,p_2,\ldots$
of distinct prime numbers, as in Euclid's proof.  (Note that Euclid's
construction is the special case in which $I=\emptyset$ at each step.)

We call a sequence resulting from this more general construction
a \emph{generalized Euclid sequence with seed $\{p_1,\ldots,p_k\}$}.
Our result is that the construction is provably
general enough to obtain every prime.
\begin{theorem}\label{t:main}
For any finite set $P$ of prime numbers, there is a generalized Euclid
sequence with seed $P$ containing every prime.
\end{theorem}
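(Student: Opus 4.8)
The plan is to build the sequence greedily, aiming to hit every prime in some fixed enumeration $q_1, q_2, q_3, \ldots$ of all primes. The key freedom we have, which ordinary Euclid sequences lack, is the choice of the subset $I$ at each step. The central idea is that by choosing $I$ cleverly, we can force a specific target prime $q$ to divide $N_I$, as long as $q$ is not already among our current primes $\{p_1,\ldots,p_k\}$.

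Let me think about how to force a given prime $q$ to divide $N_I = \prod_{i \in I} p_i + \prod_{i \notin I} p_i$. Writing $A = \prod_{i \in I} p_i$ and $B = \prod_{i \notin I} p_i$, we have $AB = p_1 \cdots p_k =: P_k$, a fixed quantity. We want $q \mid A + B$, i.e., $A + B \equiv 0 \pmod q$. Since $AB = P_k$ is fixed and $q \nmid P_k$ (because $q \notin \{p_1,\ldots,p_k\}$), we have $B \equiv P_k/A \pmod q$, so we need $A + P_k/A \equiv 0 \pmod q$, i.e. $A^2 \equiv -P_k \pmod q$. So the condition is that we can choose $I$ with $\prod_{i\in I} p_i \equiv A \pmod q$ for some square root $A$ of $-P_k$ modulo $q$.

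**The main obstacle** is thus an existence question in the multiplicative group $(\Z/q\Z)^\times$: given the prime powers $p_1, \ldots, p_k$ (as residues mod $q$), can we always select a subset whose product is a square root of $-P_k$? This requires (a) that $-P_k$ is actually a quadratic residue mod $q$, and (b) that the residues $p_i \bmod q$ generate enough of the group. Neither is guaranteed by an arbitrary seed, so the real work is to show that we have enough \emph{accumulated} primes to arrange this. I expect the argument to proceed by first growing the sequence (using $I = \emptyset$, the classical Euclid step, or other safe choices) to accumulate a rich supply of primes whose residues mod $q$ populate the group, and only then executing the subset-selection step that captures $q$. Specifically, one would want to guarantee the residues $\{p_i \bmod q\}$ multiplicatively generate $(\Z/q\Z)^\times$ (or at least a coset structure containing a square root of $-P_k$); Dirichlet's theorem on primes in arithmetic progressions should ensure that, by running the classical construction long enough, we can introduce primes lying in any desired residue class mod $q$.

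**The remaining step** is to assemble these pieces into a single infinite sequence hitting \emph{every} prime. I would interleave the targets: maintain a to-do list of primes not yet captured, and at stage $n$ work to capture $q_n$. To capture $q_n$, I first take finitely many classical Euclid steps to ensure the current prime set contains residues generating a subgroup of $(\Z/q_n\Z)^\times$ large enough to produce a square root of $-P_k \pmod{q_n}$ — crucially checking that $-P_k$ remains a quadratic residue, which may itself require augmenting the set with a suitable prime (again via Dirichlet). Once the group-theoretic condition holds, a single subset-selection step yields an $N_I$ divisible by $q_n$, and we append $q_n$ to the sequence. Since each $q_n$ is captured after finitely many steps and the sequence consists of distinct primes throughout, the resulting infinite generalized Euclid sequence with seed $P$ contains every prime, as required.
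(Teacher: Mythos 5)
Your reduction of the problem is exactly the right one, and it matches the paper's starting point: writing $A=\prod_{i\in I}p_i$, the condition $q\mid N_I$ is equivalent to $A^2\equiv -P_k\pmod*{q}$, so you need (a) $-P_k$ to be a quadratic residue mod $q$ and (b) some subset product of the $p_i$ to realize a square root of it. The gap is in how you propose to satisfy (a) and (b). You invoke Dirichlet's theorem to ``introduce primes lying in any desired residue class mod $q$,'' but Dirichlet's theorem only asserts that such primes \emph{exist}; it gives you no way to make them \emph{appear in your sequence}. The only primes you are allowed to append are prime divisors of numbers of the form $d+n/d$ with $d\mid n$, and you have no control mechanism forcing any of those divisors to lie in a prescribed residue class mod $q$. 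This is not a technicality one can wave away: the second Euclid--Mullin sequence (greedy largest-factor choices) provably omits infinitely many primes, so ``run the construction long enough and the residues will populate the group'' is precisely the kind of statement that requires proof, and it is where all the difficulty lives.

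The paper's proof is organized around overcoming exactly this obstacle, and none of its ingredients are dispensable in your sketch. First, it targets the \emph{smallest} prime $q$ not in $P$ (not an arbitrary enumeration), which guarantees that every squarefree $(q-1)$-smooth number divides $n$; combined with Rogers' bound on the density of squarefree integers, this shows the set $S$ of divisor residues mod $q$ has size $>\frac12(q-1)$ unconditionally. Second, to handle the case $\left(\frac{-n}{q}\right)=-1$, it proves via the Hasse bound for the elliptic curve $y^2=x(x^2+a)$ that some $d+n/d$ is a quadratic non-residue mod $q$, so a prime factor $p$ of it flips the character value of $-n$ --- this is the legitimate replacement for your Dirichlet appeal in step (a). Third, when $S\ne(\Z/q\Z)^\times$, it analyzes the subgroup $G$ generated by the residues of primes that the construction \emph{can} produce, using the fact that $d\mapsto d+n/d$ is at most 2-to-1 to show divisors cannot all fall in a subgroup of index $\ge4$, forcing $G$ to have index dividing $6$; a Weil bound for the genus-2 curve $y^2=x^6-4n$ then eliminates the residual index-3 case. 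Your outline identifies the correct question but supplies no substitute for any of this machinery, so as written it does not constitute a proof.
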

One particular generalized Euclid sequence was defined by Chua
(\texttt{A167604} in the OEIS \cite{a167604}), starting with $k=0$
and choosing $p_{k+1}$ as small as possible at each step. A natural
question, analogous to Mullin's, is whether Chua's sequence itself
contains every prime. This seems very likely, but difficult to
prove, since there is an obstruction that prevents the terms from
always appearing in numerical order. Precisely, if $n=p_1\cdots p_k$
is the product of the first $k$ terms of Chua's sequence, then the
next term $p_{k+1}$ is the smallest prime factor of $\prod_{d\mid
n}(d+n/d)$; thus, $d^2+n\equiv0\pmod*{p_{k+1}}$ for some $d$, so that
$\left(\frac{-n}{p_{k+1}}\right)=1$. (Alekseyev has conjectured that
$p_{k+1}$ is always the smallest prime satisfying this constraint;
see \cite{a167604}.) Given the well-known difficulty of proving good
bounds for the gaps between sign changes of a quadratic character,
we cannot rule out the possibility that Chua's sequence is very thin.

We conclude the introduction by mentioning another variant of Euclid's
construction, due to Pomerance \cite[\S1.1.3]{cp}: given a set of
primes $\{p_1,\ldots,p_k\}$, let $p_{k+1}$ be a prime that is not one
of $p_1,\ldots,p_k$ and divides a number of the form $d+1$ for $d\mid
p_1\cdots p_k$. Then, starting from $k=0$ and choosing $p_{k+1}$ as
small as possible at each step, one obtains a sequence containing every
prime, and in fact $p_k$ is the $k$th smallest prime for $k\ge5$. While
our variant is arguably truer in spirit to Euclid's proof (since it is
guaranteed to produce only new primes at each step), Pomerance's variant
has the distinct advantage of exhibiting a specific sequence containing
every prime.

\subsection*{Acknowledgements}
I thank Trevor Wooley and Carl Pomerance for supportive comments.

\section{Proof of Theorem~\ref{t:main}}
Given a prime number $q$, let $S_q\subseteq(\Z/q\Z)^\times$ be the set
of residue classes attained by the squarefree, $(q-1)$-smooth,
positive integers, i.e.\
$$
S_q=\left\{d+q\Z:d\in\Z_{>0},\;d\mid\prod_{p<q}p\right\}.
$$
One of the main ingredients in the proof of Theorem~\ref{t:main}
is that $S_q$ is large, so that if $q$ is the smallest prime not yet
attained in $p_1,\ldots,p_k$, then there is a significant chance that
$q$ is a prime factor of $d+n/d$ for some $d\mid n=p_1\cdots p_k$. From
computation for small $q$, it seems likely that $S_q=(\Z/q\Z)^\times$
for all $q\notin\{5,7\}$. We are not aware of a proof of this, but it
turns out that the following weaker approximation is sufficient for
our purposes:
\begin{lemma}\label{l:Sq}
For any prime $q$, $\#S_q>\frac12(q-1)$.
\end{lemma}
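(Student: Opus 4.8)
The plan is to bound $\#S_q$ from below by exhibiting an explicit, easily counted subset of $S_q$ consisting of small squarefree integers, and then to invoke the positive density of squarefree numbers.

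First I would note that every squarefree integer $d$ with $1\le d\le q-1$ contributes a residue class to $S_q$. Indeed, since $d$ is squarefree and each of its prime factors is at most $d\le q-1<q$, it divides $\prod_{p<q}p$; and since distinct integers in $[1,q-1]$ are distinct and nonzero modulo $q$, the assignment $d\mapsto d+q\Z$ is injective on this range. Writing $Q(x)=\#\{1\le m\le x:m\text{ squarefree}\}$, this yields $\#S_q\ge Q(q-1)$, so it suffices to prove $Q(q-1)>\tfrac12(q-1)$.

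Next I would estimate the number of non-squarefree integers up to $x=q-1$ by a union bound. Any non-squarefree $m\le x$ is divisible by $p^2$ for some prime $p$, so $x-Q(x)\le\sum_p\lfloor x/p^2\rfloor\le x\sum_p p^{-2}$. Hence $\#S_q\ge Q(q-1)\ge(q-1)\bigl(1-\sum_p p^{-2}\bigr)$, and the lemma will follow once $\sum_p p^{-2}<\tfrac12$. To see this, I would bound the prime sum by isolating $p=2$ and comparing the remaining primes with all odd integers exceeding $1$: since $\sum_{n\ge1,\ n\text{ odd}}n^{-2}=\tfrac{\pi^2}{8}$, we get $\sum_p p^{-2}\le\tfrac14+\bigl(\tfrac{\pi^2}{8}-1\bigr)=\tfrac{\pi^2}{8}-\tfrac34$, and $\tfrac{\pi^2}{8}-\tfrac34<\tfrac12$ is equivalent to $\pi^2<10$. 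Therefore $1-\sum_p p^{-2}\ge\tfrac74-\tfrac{\pi^2}{8}>\tfrac12$, as required.

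I do not expect a genuine obstacle here: the single idea is the opening observation that squarefree integers below $q$ are automatically $(q-1)$-smooth and occupy distinct residue classes, after which the statement reduces to the classical fact that squarefree numbers have density $6/\pi^2>\tfrac12$. The only point needing a little care is making the constant explicit---confirming the strict inequality $\sum_p p^{-2}<\tfrac12$ rather than merely citing the value $\approx 0.4522$---but the elementary comparison with $\sum_{n\text{ odd}}n^{-2}=\pi^2/8$ handles this cleanly.
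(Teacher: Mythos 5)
Your proof is correct, and it follows the paper's opening move exactly: both arguments observe that every squarefree integer $d$ with $1\le d\le q-1$ is automatically $(q-1)$-smooth and that such $d$ occupy distinct classes in $(\Z/q\Z)^\times$, reducing the lemma to a lower bound on the count of squarefree integers up to $q-1$. Where you diverge is in how that count is bounded. The paper simply cites a theorem of Rogers on the Schnirelmann density of the squarefree numbers, which gives the uniform bound $Q(q-1)\ge\frac{53}{88}(q-1)$, comfortably above $\frac12(q-1)$. You instead prove a weaker but sufficient bound from scratch: the union bound $x-Q(x)\le\sum_p\lfloor x/p^2\rfloor\le x\sum_p p^{-2}$, together with the elementary estimate $\sum_p p^{-2}\le\frac14+\bigl(\frac{\pi^2}{8}-1\bigr)<\frac12$ (equivalent to $\pi^2<10$), yielding $Q(q-1)\ge\bigl(\frac74-\frac{\pi^2}{8}\bigr)(q-1)>\frac12(q-1)$, with strictness holding since $q-1>0$. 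Your version buys self-containedness at the cost of a worse constant ($\approx0.516$ versus $\frac{53}{88}\approx0.602$); since the lemma only ever needs the threshold $\frac12$ (the stronger bound $\#S_q>\frac23(q-1)$ needed later for $q\in\{7,13\}$ is verified by direct computation in the paper, not via this lemma), nothing downstream is affected, and your argument is a legitimate drop-in replacement.
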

\begin{proof}
For squarefree positive integers $d\le q-1$, the residue classes $d+q\Z$
are distinct and contained in $S_q$. By \cite{rogers}, the number of
such $d$ is at least $\frac{53}{88}(q-1)>\frac12(q-1)$.
\end{proof}
In addition, we need one further input from algebraic geometry:
\begin{lemma}\label{l:hyp}
Let $q$ be an odd prime number and $a\in(\Z/q\Z)^\times$.
\begin{itemize}
\item[(i)]If $q\ne5$ or $q=5$ and $a\ne3+5\Z$ then
there exists $x\in(\Z/q\Z)^\times$ such that
$\left(\frac{x+a/x}{q}\right)\ne1$.
\item[(ii)]If $q\notin\{7,13\}$ then
there exists $x\in(\Z/q\Z)^\times$ such that
$\left(\frac{x^6+a}{q}\right)\ne1$.
\end{itemize}
\end{lemma}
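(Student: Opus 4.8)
The plan is to treat both parts by the same device. Assume the conclusion fails, so that the relevant quadratic character equals $1$ at every $x\in(\Z/q\Z)^\times$; this forces an associated character sum to equal $q-1$, which I then contradict using the Weil bound for character sums of squarefree polynomials, namely $\bigl|\sum_{x}\left(\frac{f(x)}{q}\right)\bigr|\le(\deg f-1)\sqrt q$ (this is the Hasse bound when $\deg f=3$). The finitely many primes escaping the resulting inequality are checked by hand, and it is exactly there that the stated exceptions appear.

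For (i), I would first use the identity $\left(\frac{x+a/x}{q}\right)=\left(\frac{x^{2}(x+a/x)}{q}\right)=\left(\frac{x^{3}+ax}{q}\right)$, valid for $x\ne0$ since $x^{2}$ is a nonzero square. The polynomial $f(x)=x^{3}+ax=x(x^{2}+a)$ is separable for every odd $q$ and every $a\in(\Z/q\Z)^\times$, so $\bigl|\sum_{x}\left(\frac{f(x)}{q}\right)\bigr|\le2\sqrt q$. If (i) failed, then $x^{3}+ax$ would be a nonzero square for all $x\ne0$ (in particular $-a$ would be a non-residue, so no nonzero $x$ makes it vanish), whence $\sum_{x}\left(\frac{f(x)}{q}\right)=q-1$. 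Comparing gives $q-1\le2\sqrt q$, i.e.\ $q\le5$. It then remains to inspect $q=3$ and $q=5$ directly; the only instance in which $x+a/x$ is a nonzero square for every $x$ is $q=5$, $a\equiv3$, which is precisely the excluded case.

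For (ii) I would split according to $q\bmod3$. If $q\equiv2\pmod3$, then cubing is a bijection of $\Z/q\Z$, so the substitution $v=x^{3}$ yields $\sum_{x\ne0}\left(\frac{x^{6}+a}{q}\right)=\sum_{v\ne0}\left(\frac{v^{2}+a}{q}\right)=-1-\left(\frac{a}{q}\right)\le0$, using the elementary evaluation $\sum_{v}\left(\frac{v^{2}+a}{q}\right)=-1$ for $a\ne0$; since $q-1>0$, the value $1$ cannot be attained everywhere, and no exceptions occur. If instead $q\equiv1\pmod3$ and $q>3$, then $x^{6}+a$ is separable of degree $6$, so $\bigl|\sum_{x}\left(\frac{x^{6}+a}{q}\right)\bigr|\le5\sqrt q$; were (ii) to fail, the sum over $x\ne0$ would be $q-1$, forcing $q-2\le5\sqrt q$ and hence $q\le28$. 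This leaves only $q\in\{7,13,19\}$ (with $q=3$ handled separately), which I would dispatch by computation: for $q=7$ one has $x^{6}\equiv1$ and for $q=13$ one has $x^{6}\in\{1,-1\}$, so the values of $x^{6}+a$ lie in a set of size $1$ or $2$ and can all be squares, giving the genuine exceptions; for $q=19$, where $x^{6}$ runs through the three cube roots of unity, one checks that no admissible $a$ makes every $x^{6}+a$ a square.

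The main obstacle is the case $q\equiv1\pmod6$ of part (ii). This is the only regime in which the image of $x\mapsto x^{6}$ is small, so the sum cannot be reduced to an elementary quadratic one and one genuinely needs the full genus-$2$ Weil bound; moreover it is exactly here that the true exceptions $q=7,13$ live. Making the argument rigorous thus requires both confirming separability of $x^{6}+a$ (which holds for all $a\ne0$ once $q>3$, so that the Weil bound applies) and carrying out the small-prime verification carefully enough to show that $7$ and $13$ — and no larger prime — must be excluded.
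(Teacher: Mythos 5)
Your proof is correct, and part (i) is essentially the paper's own argument: rewrite the symbol as $\left(\frac{x(x^2+a)}{q}\right)$, apply the Hasse/Weil bound $2\sqrt{q}$, and check $q\in\{3,5\}$ by hand to isolate the exception $q=5$, $a\equiv 3$. Part (ii), however, takes a genuinely different route. The paper argues uniformly for all $q\ge5$: it counts points on the genus-$2$ curve $y^2=x^6+a$ (two points at infinity), obtaining $\sum_{x\ne0}\left(\frac{x^6+a}{q}\right)\le 4\sqrt{q}-1-\left(\frac{a}{q}\right)\le 4\sqrt{q}$, which is below $q-1$ once $q\ge19$, and then verifies $q\in\{3,5,11,17\}$ directly. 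You instead split on the residue of $q$ modulo $3$: when $q\equiv2\pmod{3}$, cubing is a bijection of $(\Z/q\Z)^\times$, so the sum collapses to the elementary evaluation $\sum_{v\ne0}\left(\frac{v^2+a}{q}\right)=-1-\left(\frac{a}{q}\right)\le0$, and no curve, Weil bound, or exceptional prime is needed; only when $q\equiv1\pmod{3}$ do you invoke the Weil character-sum bound, with the generic constant $(\deg f-1)\sqrt{q}=5\sqrt{q}$, leaving $q\in\{7,13,19\}$ for hand verification. Your split buys an elementary, exception-free treatment of all primes $q\equiv2\pmod{3}$ (so $5$, $11$, $17$ need no checking at all), and it explains structurally why $7$ and $13$ are the true exceptions: they are exactly the small primes $\equiv1\pmod{6}$ for which $x^6$ takes only one or two values. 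The paper's uniform geometric bound is sharper ($4\sqrt{q}$ versus your $5\sqrt{q}$) and shorter, at the cost of a slightly longer list of opaque hand-checks. One loose end in your write-up: in (ii) you flag $q=3$ as ``handled separately'' but never actually do it; it is immediate ($x^6\equiv1$ for $x\ne0$, and for $a=1$ the value $2$ is a non-residue while for $a=2$ the value is $0$), but it falls outside both of your congruence cases and so must be stated explicitly.
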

\begin{proof}
We consider the sum
$$
\sum_{x\in(\Z/q\Z)^\times}\left(\frac{x+a/x}{q}\right)
=\sum_{x\in(\Z/q\Z)^\times}\left(\frac{x(x^2+a)}{q}\right).
$$
For $q\ge3$, $x(x^2+a)$ has no repeated roots modulo $q$, so that
$$
\{(x,y)\in(\Z/q\Z)^2:y^2=x(x^2+a)\}
$$
are the affine points of an elliptic curve. The curve has one point
at infinity, so by the Hasse bound, we have
$$
1+\sum_{x\in\Z/q\Z}\left[1+\left(\frac{x(x^2+a)}{q}\right)\right]
\le q+1+2\sqrt{q},
$$
whence
$$
\sum_{x\in(\Z/q\Z)^\times}\left(\frac{x(x^2+a)}{q}\right)\le 2\sqrt{q}.
$$
This last estimate is less than $q-1$ provided that $q\ge 7$, and we check
the claim for $q\in\{3,5\}$ directly.

Similarly, for $q\ge5$, $x^6+a$ has no repeated roots modulo $q$, so that
$$
\{(x,y)\in(\Z/q\Z)^2:y^2=x^6+a\}
$$
are the affine points of a genus $2$ curve. The curve has two points
at infinity, so by the Weil bound, we have
$$
2+\sum_{x\in\Z/q\Z}\left[1+\left(\frac{x^6+a}{q}\right)\right]
\le q+1+4\sqrt{q},
$$
whence
$$
\sum_{x\in(\Z/q\Z)^\times}\left(\frac{x^6+a}{q}\right)\le
4\sqrt{q}-1-\left(\frac{a}{q}\right)\le 4\sqrt{q}.
$$
This last estimate is less than $q-1$ provided that $q\ge 19$, and we check
the claim for $q\in\{3,5,11,17\}$ directly.
\end{proof}

Theorem~\ref{t:main} follows by induction from the following proposition.
\begin{proposition}
Let $P$ be a finite set of prime numbers and $q$ the smallest prime
not contained in $P$. Then there is a generalized Euclid sequence with
seed $P$ that contains $q$.
\end{proposition}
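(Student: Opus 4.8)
The plan is to reduce the proposition to a single congruence condition modulo $q$ and then to arrange that condition by appending auxiliary primes to the sequence. Since $q$ is the smallest prime not in $P$, every prime $p<q$ already lies in $P$; hence every squarefree $(q-1)$-smooth positive integer divides the product $n=p_1\cdots p_k$ of the terms produced so far, and the residues of the available divisors of $n$ therefore include all of $S_q$. Assume $q$ has not yet appeared (otherwise we are done), so $q\nmid n$. If some divisor $\delta\mid n$ satisfies $\delta^2\equiv -n\pmod*q$, then taking $I$ to be the set of indices of the primes dividing $\delta$ gives $N_I=\delta+n/\delta\equiv\delta^{-1}(\delta^2+n)\equiv0\pmod*q$, so $q\mid N_I$; as $q\notin\{p_1,\dots,p_k\}$ we may legitimately set $p_{k+1}=q$ and finish. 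Thus it suffices to steer $n\bmod q$ into the set $-T_q$, where $T_q=\{d^2:d\in S_q\}$.

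First I would record that $T_q$ is large. By Lemma~\ref{l:Sq}, $\#S_q>\tfrac12(q-1)$; since the squaring map is two-to-one on $(\Z/q\Z)^\times$ and a nonzero square fails to lie in $T_q$ only when \emph{both} of its square roots avoid $S_q$, a short count shows that $T_q$ omits only a small fraction of the quadratic residues. In particular $T_q\subseteq\mathrm{QR}$ is nonempty, and since $\left(\frac{-n}{q}\right)$ must equal $1$ for $-n$ to belong to $T_q$, the task splits into two sub-tasks: adjusting the quadratic character of $n$ so that $-n$ is a square, and then moving $n$ within the correct coset so that $-n$ actually lands in the large set $T_q$.

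The mechanism for changing $n\bmod q$ is to adjoin prime factors of the numbers $N_J=\delta+n/\delta$ for smooth divisors $\delta$; these are automatically coprime to $n$, hence new. The only data we control are the multiplicative characters of $N_J$: if $\left(\frac{N_J}{q}\right)=-1$ then $N_J$ has a prime factor that is a quadratic nonresidue, which we may append, thereby flipping the quadratic character of $n$. This is where Lemma~\ref{l:hyp} enters. Writing $a\equiv n\pmod*q$ and letting $\delta$ range over $S_q$, part~(i) produces $x=\delta$ with $\left(\frac{\delta+n/\delta}{q}\right)\ne1$; combining this with the character-sum bound from its proof (which caps the number of $x$ giving value $+1$ at about $\tfrac12(q-1)+\sqrt{q}$) and the lower bound $\#S_q\ge\frac{53}{88}(q-1)$, we may take such an $x$ inside $S_q$. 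Either $N_J\equiv0$ (and we finish at once) or we obtain a genuine nonresidue prime to append. For the second sub-task, restricting $\delta$ to cube residues $\delta\equiv t^3$ gives $\delta+n/\delta\equiv t^{-3}(t^6+n)$, so that $\left(\frac{\delta+n/\delta}{q}\right)=\left(\frac{t}{q}\right)\left(\frac{t^6+n}{q}\right)$; part~(ii) is exactly the control of $\left(\frac{t^6+a}{q}\right)$ needed to append primes with prescribed cubic character as well. Iterating these two moves lets the product of appended primes sweep out a subgroup of $(\Z/q\Z)^\times$ large enough, together with the abundance of $T_q$, to carry $n$ into $-T_q$.

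The main obstacle is precisely this last assembly: we never control the residue class of an individual appended prime, only the quadratic and cubic characters of a composite $N_J$, so the crux is to prove that the reachable multipliers, in combination with the largeness of $T_q$ from Lemma~\ref{l:Sq}, always suffice to reach $-T_q$. The Hasse and Weil estimates packaged in Lemma~\ref{l:hyp} are what make this character control unconditional, at the cost of finitely many exceptions; the residual small primes $q\in\{5,7,13\}$, along with the other small $q$ lying below the range where those bounds take effect, I would dispose of by direct computation.
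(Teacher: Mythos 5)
Your reduction (exhibit a divisor $d\mid n$ with $d^2\equiv-n\pmod*{q}$, so that $q\mid d+n/d$ and $q$ may be appended) and your mechanism for flipping the quadratic character of $n$ (find $\delta\mid n$ with $\left(\frac{\delta+n/\delta}{q}\right)=-1$, then append a nonresidue prime factor) are both sound, and your quantitative fix for keeping $\delta$ inside $S_q$ --- playing Rogers' bound $\frac{53}{88}(q-1)$ against the Hasse cap of roughly $\frac12(q-1)+\sqrt{q}$ on the number of $x$ with character value $+1$ --- does work for large $q$. But the argument has a genuine gap at exactly the point you yourself flag as ``the crux'': you never prove that the reachable multipliers carry $-n$ into $T_q$, and the tools you name cannot do it. Your proposed use of Lemma~\ref{l:hyp}(ii) is a misreading: that lemma controls the \emph{quadratic} character of $x^6+a$, so at best it certifies (once again) a prime factor of some $N_J$ that is a quadratic nonresidue; it says nothing about the \emph{cubic} characters of those prime factors, so ``appending primes with prescribed cubic character'' does not follow from it. Moreover, even full control of the quadratic and cubic characters of appended primes would not finish the proof: the set $-T_q$ is not a union of cosets of any subgroup, and the subgroup actually generated by the appended primes is not determined by their characters (a single appended nonresidue can generate a subgroup of large index), so ``sweeping out a subgroup'' plus ``$T_q$ is large'' does not place $-n$ in $-T_q$.

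The paper closes this gap with a mechanism absent from your outline: a stabilization argument. One keeps appending primes from $T=\{p:p\mid(d+n/d)\text{ for some }d\mid n\}$ until either $q\in T$ or the divisor-residue set $S=\{d+q\Z:d\mid n\}$ stops growing; in the stabilized state $S$ is a union of cosets of the subgroup $G\le(\Z/q\Z)^\times$ generated by \emph{all} of $T$. Then, for any subgroup $H$ of index at least $4$, the map $d\mapsto d+n/d$ is at most $2$-to-$1$, so $\#\{d:d+n/d\in H\}\le2\#H\le\frac12(q-1)<\#S$ by Lemma~\ref{l:Sq}, producing some $p\in T$ outside $H$; applied to the subgroups $H_r$ attached to the prime powers dividing $q-1$, this forces the index of $G$ to divide $6$, and a short case analysis (via $H_6$, $H_2$, $H_3$) reduces everything to the single stubborn case $G=H_3$, $S=H_3\cup sH_3$. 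Only there does Lemma~\ref{l:hyp}(ii) enter, and in a way quite different from yours: if no $p\in T$ escapes $H_3$, then $d\mapsto d+n/d$ must map $S$ exactly $2$-to-$1$ onto $H_3$, so by the quadratic formula $\left(\frac{h^2-4n}{q}\right)=1$ for every $h\in H_3$, i.e.\ $\left(\frac{x^6-4n}{q}\right)=1$ for all $x$, contradicting the lemma with $a=-4n$ for $q\notin\{7,13\}$ (the cases $q\in\{5,7,13\}$ being handled by direct checks). Without this coset structure and endgame, your proposal is a plan with its central step unproven rather than a proof.
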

\begin{proof}
Suppose that $P=\{p_1,\ldots,p_k\}$, and put $n=p_1\cdots p_k$.
If $q=2$ then $n+1$ is even, so we may choose $2$ as the next term,
$p_{k+1}$. Hence we may assume that $q$ is odd.

Put
$$
S=\{d+q\Z:d\in\Z_{>0},\;d\mid n\}\subseteq(\Z/q\Z)^\times,
$$
and note that $S\supseteq S_q$. Suppose first that
$S=(\Z/q\Z)^\times$. If $\left(\frac{-n}{q}\right)=1$ then it
follows that there is a $d\mid n$ such that $d+n/d\equiv0\pmod*{q}$,
so we can choose $q$ as the next term. On the other hand, if
$\left(\frac{-n}{q}\right)=-1$ then by Lemma~\ref{l:hyp}(i) we may choose
$d\mid n$ such that $\left(\frac{d+n/d}{q}\right)=-1$, provided that
$q\ne5$ or $n\not\equiv3\pmod*{5}$. For this choice of $d$ there must be a
prime $p\mid(d+n/d)$ such that $\left(\frac{p}{q}\right)=-1$. Choosing
this $p$ as the next term, we replace $n$ by $n'=pn$, so that
$\left(\frac{-n'}{p}\right)=1$, and we may then follow this by
$q$, as above. For $q=5$ and $n\equiv3\pmod*{5}$ we choose $d=1$;
since $n+1\equiv-1\pmod*{5}$ there is a prime $p\mid(n+1)$ with
$p\not\equiv1\pmod*{5}$, and replacing $n$ by $pn$ gives a different
residue with which we can carry out the proof above.

Suppose now that $S\ne(\Z/q\Z)^\times$. We seek to enlarge $S$ by
continuing the sequence, i.e.\ we choose $p=p_{k+1}$ from
$$
T=\{p:p\text{ prime and }p\mid(d+n/d)\text{ for some }d\mid n\},
$$
and replace $P$ by $P\cup\{p\}$, $n$ by $pn$ and $S$ by $S\cup pS$.
We are free to repeat this procedure until either $q\in T$ (in which
case we may choose $q$ as the next term) or $S$ stabilizes, so that
$pS\subseteq S$ for every choice of $p\in T$.  If that is the case then
it is easy to see that for every $s\in S$, $S$ contains the coset $sG$,
where $G\le(\Z/q\Z)^\times$ is the subgroup generated by $\{p+q\Z:p\in
T\}$. Thus, $S=\bigcup_{s\in S}sG$ is a union of cosets; in particular,
$\#G$ divides $\#S$.

Next, let $H$ be a subgroup of $(\Z/q\Z)^\times$ of index at least $4$.
For any $h\in H$, the number of $d\in(\Z/q\Z)^\times$ such that
$d+n/d=h$ is at most $2$. Hence,
$$
\#\{d\in(\Z/q\Z)^\times:d+n/d\in H\}\le 2\#H\le\tfrac12(q-1).
$$
By Lemma~\ref{l:Sq},
it follows that there exists $d\mid n$ such that $(d+n/d)+q\Z\notin H$.
In turn this implies that $p+q\Z\notin H$ for some $p\in T$.

For any $r\mid(q-1)$, consider the subgroup
$$
H_r=\{h\in(\Z/q\Z)^\times:h^{\frac{q-1}{r}}=1\}
=\{x^r:x\in(\Z/q\Z)^\times\}.
$$
Let $q-1=\prod_{i=1}^mr_i^{e_i}$ be the prime factorization of $q-1$.
For each $r_i\ge5$ we apply the above argument with
$$
H=H_{r_i}=\{h\in(\Z/q\Z)^\times:
r_i^{e_i}\text{ does not divide the order of }h\}
$$
to see that $G$ has order divisible by $r_i^{e_i}$.  For $r_i\in\{2,3\}$
the index of $H_{r_i}$ is too small to apply the argument, but we may
still apply it to $H_{r_i^2}$ (when $r_i^2\mid(q-1)$) to see that $G$
has order divisible by $r_i^{e_i-1}$. Thus we find that the index of $G$
in $(\Z/q\Z)^\times$ divides $6$.

If $q\not\equiv1\pmod*{3}$ then $G$ has index at most $2$, so that
$\frac12(q-1)\mid\#G\mid\#S$; by Lemma~\ref{l:Sq} it follows that
$S=(\Z/q\Z)^\times$, as desired.
If $q\equiv1\pmod*{3}$ then we apply the
above argument with $H=H_6$ to see that there exists $p\in T$ such
that $p^{\frac{q-1}{6}}\not\equiv1\pmod*{q}$. Since
$p^{\frac{q-1}{6}}=p^{\frac{q-1}{2}}/p^{\frac{q-1}{3}}$, it follows that
at least one of $H_2$ and $H_3$ does not contain $p+q\Z$. If
$p+q\Z\notin H_3$ then again $G$ has index at most $2$, and we
conclude that $S=(\Z/q\Z)^\times$ as above.

Hence, we may assume that $p+q\Z\notin H_2$,
so that $G$ has index dividing $3$.
If $S=(\Z/q\Z)^\times$ then we are finished, so we may assume
that $G=H_3$ and $\#S<q-1$. By Lemma~\ref{l:Sq}, we must have $\#S>\#G$,
and it follows that $S=G\cup sG$ for some $s\in(\Z/q\Z)^\times\setminus G$.
Going through the argument above with $H=H_3$, to avoid concluding that
there exists $p\in T$ such that $p+q\Z\notin H_3$, the function
$d\mapsto d+n/d$ must map $S$ 2--1 onto $H_3$. By the quadratic formula,
this means in particular that
$\left(\frac{h^2-4n}{q}\right)=1$ for every $h\in H_3$, and thus
$\left(\frac{x^6-4n}{q}\right)=1$ for every $x\in(\Z/q\Z)^\times$.
However, that contradicts Lemma~\ref{l:hyp}(ii) for
$q\notin\{7,13\}$, and for $q\in\{7,13\}$ we verify directly
that $\#S_q>\frac23(q-1)$. This concludes the proof.
\end{proof}
\bibliographystyle{amsplain}
\providecommand{\bysame}{\leavevmode\hbox to3em{\hrulefill}\thinspace}
\providecommand{\MR}{\relax\ifhmode\unskip\space\fi MR }
\providecommand{\MRhref}[2]{%
  \href{http://www.ams.org/mathscinet-getitem?mr=#1}{#2}
}
\providecommand{\href}[2]{#2}

\end{document}